\theoremstyle{plain} 
\newtheorem*{Thm*}{Theorem} 
\newtheorem{Prop}[equation]{Proposition}
\newtheorem{Lem}[equation]{Lemma}
\theoremstyle{definition}
\newtheorem{Ex}{Example} 
\numberwithin{equation}{section}
\newcommand{\C}{\mathbb C}
\newcommand{\Z}{\mathbb{Z}}
\newcommand{\R}{\mathbb{R}}
\newcommand{\la}{\lambda}
\newcommand{\ep}{\varepsilon}
\newcommand{\Ind}{\operatorname{Ind}}
\newcommand{\sign}{\operatorname{sign}}
\newcommand{\Irr}{\operatorname{Irr}}
\newcommand{\ie}{\text{i.e.\ }}
\newcommand{\GL}{\operatorname{GL}}
\title[Archimedean local converse theorem]{A Local converse theorem
  for Archimedean $\GL(n)$}
\author{Moshe Adrian}
\address{Department of Mathematics 
Queens College, CUNY
65-30 Kissena Blvd., Queens, NY 11367-15971}
\email{moshe.adrian@qc.cuny.edu}
\author{Shuichiro Takeda}
\address{Mathematics Department, University of Missouri-Columbia, 202
  Math Sciences Building, Columbia, MO, 65211}
\email{takedas@missouri.edu} 
\begin{document}

\maketitle

\begin{abstract} 
In this paper, we will prove a version of local
converse theorem for $\GL_n$ over the archimedean local fields which
characterizes an infinitesimal equivalence class of irreducible
admissible representations of $\GL_n(\R)$ (or $\GL_n(\C)$)
in terms of twisted local $L$-factors. 
\end{abstract}


\section{Introduction}


Let $F$ be a local field of characteristic 0, and let $\Irr_n$ be the set
of (infinitesimal) equivalence classes of irreducible admissible
representations of $\GL_n(F)$. A so-called local converse theorem for
$\GL_n(F)$ characterizes the set $\Irr_n$ in terms of local factors
with some suitable twists. If $F$ is
non-archimedean, the first major result is the one by Henniart
(\cite{Henniart1}) in which he shows that if two {\it generic}
representations $\pi, \pi'\in\Irr_n$ are such that
\[
\gamma(s, \pi\otimes\tau, \psi)=\gamma(s, \pi'\otimes\tau, \psi)
\]
for all $\tau\in\Irr_t$ for all $t=1,\dots,n-1$, where the
$\gamma$-factor is the one defined by Jacquet, Piatetski-Shapiro and
Shalika, then
$\pi=\pi'$. Later, in \cite{Chen} Chen improved this result by requiring
$t$ be only up to $n-2$ with the extra assumption that $\pi$ and
$\pi'$ have the same central character (we note that it was later known that $\GL_1$ twisting determines the central character; see, for example, \cite{JNS}). It had been conjectured by
Jacquet for some time that one only needs $t\leq[\frac{n}{2}]$. And very
recently this conjecture has been proven by Chai in \cite{Chai}, and Jacquet and Liu
in \cite{JL} (see also \cite{ALSX, JNS}). Let us also mention that Nien in \cite{Nien} has shown
an analogous result when $F$ is a finite field.

In this paper, we will prove a local converse theorem when $F$ is
archimedean by using $L$-factors of Artin type (without the
assumption of the genericity and the central character) with only up
to $\GL(1)$-twists for the complex case and up to $\GL(2)$-twists for the
real case. Namely, we will prove
\begin{Thm*}[Complex Case]
Let $F=\C$. If the two representations  $\pi, \pi'\in\Irr_n$ of
$\GL_n(\C)$ satisfy
\[
L(s, \pi\times\chi)=L(s, \pi'\times\chi)
\]
for all characters $\chi$ on $\C^\times$, where the $L$-factors are
of Artin type defined by the local Langlands correspondence, then $\pi=\pi'$. 
\end{Thm*}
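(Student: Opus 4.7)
Via the archimedean local Langlands correspondence for $\GL_n(\C)$, the class of $\pi\in\Irr_n$ corresponds to a semisimple $n$-dimensional representation of the Weil group $W_\C=\C^\times$. Since $W_\C$ is abelian, this decomposes as a sum of characters $\phi_\pi=\chi_1\oplus\cdots\oplus\chi_n$, and similarly $\phi_{\pi'}=\chi'_1\oplus\cdots\oplus\chi'_n$. Every character of $\C^\times$ takes the form $\chi_{k,\sigma}(z)=(z/\bar z)^{k/2}(z\bar z)^{\sigma/2}$ with $k\in\Z$ and $\sigma\in\C$, and has $L$-factor $L(s,\chi_{k,\sigma})=\Gamma_\C(s+\sigma+|k|/2)$. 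The theorem thus reduces to proving the multiset equality $\{(k_i,\sigma_i)\}_{i=1}^n=\{(k'_j,\sigma'_j)\}_{j=1}^n$.

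The first step is to twist by an arbitrary character $\chi=\chi_{m,\tau}$ and apply multiplicativity of Artin $L$-factors; the hypothesis becomes
\[
\prod_{i=1}^n \Gamma_\C\!\left(s+\tau+\sigma_i+\tfrac{|m+k_i|}{2}\right)
\;=\;
\prod_{j=1}^n \Gamma_\C\!\left(s+\tau+\sigma'_j+\tfrac{|m+k'_j|}{2}\right)
\]
for all $m\in\Z$, $\tau\in\C$. Since a product $\prod_i \Gamma_\C(s+\alpha_i)$ determines the multiset $\{\alpha_i\}$ through its pole divisor---$\Gamma_\C(s+\alpha)$ contributes simple poles at $s\in -\alpha+\Z_{\le 0}$ with explicit residues---this yields, for every $m\in\Z$,
\[
\bigl\{\sigma_i+|m+k_i|/2\bigr\}_{i=1}^n
\;=\;
\bigl\{\sigma'_j+|m+k'_j|/2\bigr\}_{j=1}^n \qquad (\star)
\]
as multisets in $\C$.

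The main remaining task is a combinatorial reconstruction: deduce $\{(k_i,\sigma_i)\}=\{(k'_j,\sigma'_j)\}$ from the family $(\star)$. Setting $a_i:=\sigma_i+k_i/2$ and $b_i:=\sigma_i-k_i/2$, each index $i$ traces a ``V-shaped'' trajectory $m\mapsto \sigma_i+|m+k_i|/2$, equal to $a_i+m/2$ for $m\ge -k_i$ and to $b_i-m/2$ for $m\le -k_i$, with kink at $m=-k_i$ of value $\sigma_i$. Letting $m\to+\infty$ in $(\star)$ yields $\{a_i\}=\{a'_j\}$, and $m\to-\infty$ yields $\{b_i\}=\{b'_j\}$. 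The plan is to split the indices by $\operatorname{Im}(\sigma_i)$ (which is constant along each trajectory) to reduce to the case of real $\sigma_i$, and then induct on $n$: identify a single V-trajectory forced to appear on both sides (for instance take $i_0$ maximizing $a_i$, then use $(\star)$ near $m=-k_{i_0}$ to pin down $b_{i_0}$), cancel the corresponding common character from both Langlands parameters, and invoke the inductive hypothesis.

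The hard part will be the last combinatorial step: knowing $\{a_i\}=\{a'_j\}$ and $\{b_i\}=\{b'_j\}$ alone does not determine how the $a$'s pair with the $b$'s into trajectories, so the intermediate values of $m$ in $(\star)$ must be exploited. A key technical input is that the slope discontinuities of $m\mapsto\sum_i(\sigma_i+|m+k_i|/2)$ pin down the multiset $\{-k_i\}$ of kink locations; compatibility of $(\star)$ at such a kink should then force a unique matching of one $a_i$ with one $b_i$ into a common V-trajectory, supplying the cancellation needed to run the induction.
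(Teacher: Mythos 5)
Your reduction to the Galois side and the extraction of the multiset identities $(\star)$ from pole divisors of products of complex Gamma factors are correct and coincide with the paper's setup; the assertion that $\prod_i\Gamma_\C(s+\alpha_i)$ determines the multiset $\{\alpha_i\}$ is exactly the paper's Lemma \ref{L:Lemma}, and it does require the \emph{orders} of the poles (not just their locations), since $\Gamma_\C(s+\alpha)$ and $\Gamma_\C(s+\alpha+1)$ have nested pole sets. Likewise, letting $m\to+\infty$ to get $\{a_i\}=\{a'_j\}$ is the paper's Step 1, and your observation that the second differences in $m$ of $\sum_i(\sigma_i+|m+k_i|/2)$ recover the multiset $\{k_i\}$ is a clean and valid remark (stronger, in fact, than the paper's Step 2, which only establishes equality of the minimal $N$'s). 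A minor normalization point: with $\chi_{k,\sigma}(z)=(z/\bar z)^{k/2}(z\bar z)^{\sigma/2}$ the $L$-factor argument should be $\sigma/2+|k|/2$ rather than $\sigma+|k|/2$, but this does not affect the structure of the argument.

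The gap is that the combinatorial reconstruction --- which is the actual content of the theorem and occupies Steps 2 and 3 of the paper's proof --- is only announced, not carried out. You write that compatibility of $(\star)$ at a kink ``should force a unique matching,'' but the data you have actually established, namely $\{a_i\}=\{a'_j\}$, $\{b_i\}=\{b'_j\}$ and $\{k_i\}=\{k'_j\}$, does not suffice to determine the pairing: for instance the families $\{(k,\sigma)\}=\{(0,0),(-1,\tfrac32),(1,\tfrac32)\}$ and $\{(0,2),(-1,\tfrac12),(1,\tfrac12)\}$ have identical $a$-, $b$- and $k$-multisets (all equal to $\{0,1,2\}$, $\{0,1,2\}$, $\{-1,0,1\}$ respectively) yet are distinct, and are separated only by $(\star)$ at the intermediate value $m=0$, where the value multisets are $\{0,2,2\}$ versus $\{1,1,2\}$. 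So one must genuinely exploit intermediate twists, and the bookkeeping needed to isolate a common V-trajectory and cancel it --- handling ties among the $a_i$ and the possibility that an increasing branch on one side is matched at isolated $m$ by a decreasing branch on the other --- is precisely what the paper does via the partial order $t\preceq t'$ iff $t'-t\in\Z^{\geq 0}$ and a careful induction on minimal elements. Until that step is written out, what you have is a correct reduction plus a plausible but unproven combinatorial claim.
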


\begin{Thm*}[Real Case]
Let $F=\R$. If the two representations $\pi, \pi'\in\Irr_n$ of $\GL_n(\R)$ satisfy
\[
L(s, \pi\times\tau)=L(s, \pi'\times\tau)
\]
for all $\tau\in\Irr_t$ with $t=1, 2$, where the $L$-factors are
of Artin type defined by the local Langlands correspondence, then $\pi=\pi'$. 
\end{Thm*}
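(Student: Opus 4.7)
The plan is to pass via the archimedean Local Langlands Correspondence, attaching to each $\pi\in\Irr_n$ a semisimple Langlands parameter $\phi_\pi\colon W_F\to\GL_n(\C)$. Because the theorem's $L$-factors are the Artin factors of these parameters, the hypothesis becomes
\[
L(s,\phi_\pi\otimes\sigma)=L(s,\phi_{\pi'}\otimes\sigma)
\]
for every semisimple $\sigma$ of dimension $\leq 2$ (resp.\ $\leq 1$ when $F=\C$), and the goal is $\phi_\pi\cong\phi_{\pi'}$. The key analytic ingredients I would use are: (i) the product $\prod_i\Gamma_\C(z+\alpha_i)$, and similarly $\prod_i\Gamma_\R(z+\alpha_i)$, determines the multiset $\{\alpha_i\}_i$ from its polar divisor (via pole orders along $\Z$-cosets, resp.\ $2\Z$-cosets); and (ii) in any mixed product of $\Gamma_\R$- and $\Gamma_\C$-factors the two kinds separate, because the poles of $\Gamma_\R(z+\alpha)$ step by $2$ while those of $\Gamma_\C(z+\alpha)$ step by $1$.

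For $F=\C$, every irreducible representation of $W_\C=\C^\times$ is a character $\chi_{n,t}(z)=(z/|z|)^n|z|^{2t}$ with $L(s,\chi_{n,t})=\Gamma_\C(s+t+|n|/2)$. Writing $\phi=\bigoplus_i\chi_{n_i,t_i}$ and twisting by $\chi_{m,u}$ gives
\[
L(s,\phi\otimes\chi_{m,u})=\prod_i\Gamma_\C\bigl(s+u+t_i+|n_i+m|/2\bigr),
\]
so by (i), for each fixed $m\in\Z$ I recover the multiset $A(m):=\{t_i+|n_i+m|/2\}_i$. Since $m\mapsto t_i+|n_i+m|/2$ is piecewise linear with its unique kink at $m=-n_i$, following each element of $A(m)$ as $m$ ranges over $\Z$ pins down the individual pairs $(n_i,t_i)$.

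For $F=\R$ I would split $\phi=\phi^{(1)}\oplus\phi^{(2)}$ into $1$-dimensional summands $\operatorname{sgn}^{\epsilon_i}|\cdot|^{a_i}$ and $2$-dimensional irreducibles $\delta_{k_j,b_j}:=\Ind_{W_\C}^{W_\R}\chi_{k_j,b_j}$ with $k_j\geq 1$, having $L$-factors $\Gamma_\R(s+a_i+\epsilon_i)$ and $\Gamma_\C(s+b_j+k_j/2)$ respectively. For each $\GL(1)$-twist $\operatorname{sgn}^{\eta}|\cdot|^u$, the resulting $L$-factor splits by (ii) into a $\Gamma_\R$-piece and a $\Gamma_\C$-piece. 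Applying (i) and varying $(\eta,u)$, the $\Gamma_\R$-piece recovers $\phi^{(1)}$ by the complex-case argument applied in each of the two sign classes, while the $\Gamma_\C$-piece only recovers the multiset $\{b_j+k_j/2\}_j$ and does not yet separate $b_j$ from $k_j$.

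To close the real case I would use $\GL(2)$-twists. Mackey's formula gives $\delta_{k,b}\otimes\delta_{\ell,u}=\delta_{k+\ell,b+u}\oplus\delta_{|k-\ell|,b+u}$ for $k\neq\ell$, so for each $j$ the twisted $L$-factor contains
\[
\Gamma_\C\bigl(s+u+b_j+(k_j+\ell)/2\bigr)\cdot\Gamma_\C\bigl(s+u+b_j+|k_j-\ell|/2\bigr);
\]
taking $\ell$ larger than all the $k_j$ (finite in number), the sum of the two shifts is $2b_j+u+\ell$ and their difference is $k_j$, which separates $k_j$ from $b_j$. The contribution of the already-known $\phi^{(1)}$ to the same twist is $\bigoplus_i\delta_{\ell,a_i+u}$, which can be stripped off. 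The principal technical obstacle is the multiset bookkeeping as $\ell$ varies, especially at the resonant value $\ell=k_j$ where $\delta_{k_j,b_j}\otimes\delta_{k_j,u}$ decomposes as $\delta_{2k_j,b_j+u}\oplus|\cdot|^{b_j+u}\oplus\operatorname{sgn}|\cdot|^{b_j+u}$ and thereby cross-contaminates the $\Gamma_\R$/$\Gamma_\C$ separation; I would handle this by an induction on the number of $2$-dimensional summands, peeling them off in decreasing order of $k_j$ and only using non-resonant twists $\ell>\max_jk_j$.
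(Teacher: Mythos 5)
Your complex case is essentially the paper's argument (the paper organizes the ``kink'' analysis as an induction on maximal poles rather than literally following elements of the unordered multisets $A(m)$, but the content is the same). The real case, however, has a genuine gap at its foundation: your separation claim (ii) is false. By the duplication formula, $\Gamma_{\R}(z+\alpha)\,\Gamma_{\R}(z+\alpha+1)$ and $\Gamma_{\C}(z+\alpha)$ have identical polar divisors (simple poles exactly at $z=-\alpha-k$, $k\in\Z^{\geq 0}$), so the polar divisor of a mixed product does not tell you which poles come from $\Gamma_{\R}$-factors and which from $\Gamma_{\C}$-factors. Concretely, take $\phi=|\cdot|^{a}\oplus\operatorname{sgn}|\cdot|^{a}\;(=\Ind_{W_\C}^{W_\R}\chi_{0,\cdot}$, i.e.\ $\varphi_{0,t}=\la_{0,t}\oplus\la_{1,t+1}$ in the paper's notation$)$ versus $\phi'=\delta_{k,a-k/2}$ for any $k\geq 1$: after every $\GL(1)$-twist $\operatorname{sgn}^{\eta}|\cdot|^{u}$ both have $L$-factor $\Gamma_{\C}(s+u+a)$ up to an exponential, yet $\phi$ has two $1$-dimensional constituents and $\phi'$ has none. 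Hence $\GL(1)$-twists determine neither $\phi^{(1)}$ nor even the number $p$ of $1$-dimensional summands, so your later step of ``stripping off the already-known $\phi^{(1)}$'' from the $\GL(2)$-twisted factors is unjustified. This is the same phenomenon behind the paper's example showing that the $\GL(2)$-twist requirement is already sharp for $n=2$.

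The fix is to reverse your order of operations, which is what the paper does. Its Step 1 uses $\GL(2)$-twists $\varphi_{-M,s}$ with $M=0$ and $M=1$: such a twist converts every $1$-dimensional constituent $\la_{\ep,t}$ into the $2$-dimensional $\varphi_{-M,t+s-\ep}$, so \emph{all} resulting factors are $\Gamma_{\C}$'s and Lemma \ref{L:Lemma} applies cleanly; comparing $M=0$ with $M=1$ then separates the multiset $\{t_i-\ep_i\}$ coming from the $1$-dimensional part from the multiset $\{u_j\}$ coming from the $2$-dimensional part (in particular it determines $p$ and $q$). Only afterwards does the $\GL(1)$-twist by $\la_{0,s}$ enter, and the now-known $\Gamma_{\C}$-contribution of the $2$-dimensional part is cancelled from both sides before the remaining $\Gamma_{\R}$-factors are compared. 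Your treatment of the $2$-dimensional part via $\GL(2)$-twists with large $\ell$ (sum and difference of the two shifts recovering $b_j$ and $k_j$) is sound in spirit and close to the paper's Step 3, but it cannot be launched until the $1$-dimensional part has been pinned down by $\GL(2)$-twists as well.
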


We will show that for $F=\R$ the bound $t\leq 2$ is sharp even
if $n=2$, namely one always needs $\GL(2)$-twists. We will show,
however, that if we assume that $\pi$ and $\pi'$ have the same
central character and $n\leq 3$, then one only needs up to
$t=1$. However, even with the central character assumption, for
$n\geq 4$ one always needs $\GL(2)$-twists, and indeed we will give an
infinite family of nonequivalent representations of $\GL_4(\R)$ which cannot be
distinguished by $\GL(1)$-twists.

Let us note that the local converse theorems via gamma factors as in
\cite{Henniart1, Chen, Chai, JL} all assume that the representations
are generic. This is because gamma factors cannot distinguish the
representations appearing in a same parabolically
induced representation. Namely, all the constituents of a parabolically
induced representation have the same gamma factor, and accordingly the
genericity assumption is necessary. Our theorem should rather be
considered as an archimedean analogue of the local converse theorem by
Henniart in \cite{Henniart2} in which he characterizes the local
Langlands correspondence for $p$-adic $\GL_n$ via $L$- and
$\epsilon$-factors without the genericity assumption. Of course, it
makes sense to ask if it is possible to establish a local converse
theorem for the archimedean case in terms of gamma factors, for generic
representations. We will take up this issue in our later work.

The basic idea of our proof is that we pass to the ``Galois side'' via
the local Langlands correspondence, so that
the local $L$-factors, which are then essentially products of gamma
functions, can be explicitly computed in terms of the data for the
corresponding representations of the Weil group, and then we will
compare poles of the gamma functions.

The structure of the paper is as follows: In the next section (Section
2), we will review basics of the local Langlands parameters and their
local factors. In Section 3, we will
introduce a certain partial order on complex numbers, which will be
useful when we compare poles of local $L$-factors. In Sections 4 and 5,
we will prove our theorems for the complex and real case,
respectively. Finally in Section 6, we will discuss some of the low
ranks cases.

\quad

\begin{center}{\bf Notations}\end{center}

Throughout, $F$ is either $\R$ or $\C$. We let $\Irr_n$ be the
set of infinitesimal equivalence classes of irreducible admissible
representations of $\GL_n(F)$. For $z\in F$, we let
$|z|=\sqrt{z\bar{z}}$, so that if $F=\R$, this is the absolute value of
$z$, and if $F=\C$, it is the usual modulus of $z$. We also let
$\|z\|=z\bar{z}=|z|^2$. By a character, we always mean a
quasi-character, and $\Irr_1$ is
the set of characters of $F^\times$. We let $\psi_F$ be the standard
choice of additive character on $F$; namely if $F=\R$, then
$\psi_\R(r)=e^{2\pi i r}$, and if $F=\C$, then
$\psi_\C(z)=\psi_\R\circ\operatorname{Tr}_{\C/\R}(z)=e^{2\pi
  i(z+\bar{z})}$. We let $\Gamma(s)$ be the gamma function. Recall
that $\Gamma(s)$ satisfies
\[
\Gamma\left(\frac{s}{2}\right)\Gamma\left(\frac{s+1}{2}\right)
=2^{1-s}\sqrt{\pi}\Gamma(s),\quad\text{(duplication formula)}.
\]
Also recall that $\Gamma(s)$ has no zeroes, and has infinitely many
poles, which are precisely at $s=0, -1,
-2, \dots$ and all of which are simple.

\quad

\begin{center}{\bf Acknowledgements}\end{center}

This project was initiated when both authors attended the conference
``The 2016 Paul
J.\ Sally,\ Jr.\ Midwest Representation Theory Conference: In honor of
the 70th Birthday of Philip Kutzko'' at the University of Iowa, which
was funded by the University of Iowa and the NSF. We
would like to thank them for their support and also would like to thank the
conference organizers for providing us with the good opportunity for
the interaction. 

The first named author was supported by a grant from the Simons
Foundation \#422638 and by a PSC-CUNY award, jointly funded by the
Professional Staff Congress and The City University of New York.

The second named author was supported by Simons Foundations
Collaboration Grant \#35952, and NSA Young Investigator Grant
H98230-16-1-0312.

\quad


\section{Local Langlands parameters for $\GL_n$}


In this section, we recall the basics of the local Langland parameters
for $\GL_n(F)$, \ie the $n$-dimensional continuous complex representations of
the Weil group of $F$, and their local factors. The most definitive reference
is \cite{Tate}.

\subsection{Weil group}
We let $W_F$ be the Weil group of $F$. So $W_\C=\C^\times$ and
$W_\R=\C^\times\cup j\C^\times$ with $j^2=-1$ and
$jzj^{-1}=\bar{z}$. We naturally view $W_\C=\C^\times$ as a subgroup of
$W_\R$. Note that $F^\times\cong W_F^{ab}$. This is obvious if
$F=\C$. If $F=\R$, we have a surjective map
\begin{equation}\label{E:W_R}
W_\R\longrightarrow \R^\times, \quad z\mapsto z\bar{z},\; j\mapsto -1,
\end{equation}
whose kernel is the commutator group $[W_\R, W_\R]$, which is actually of the form
$\{z\in\C^\times:|z|=1\}$.

\subsection{Characters on $F^\times$}
We will describe all the characters of $F^\times$. If $F=\C$,
each character is of the form 
\[
\chi_{-N, t}(z):=z^{-N}\|z\|^t,
\]
where $N\in\Z$ and $t\in\C$.  Let us note that if we write $z=re^{i\theta}$ with $r,
\theta\in\R$ as usual, we have
\[
\chi_{-N,t}(z)=r^{2t-N}e^{-iN\theta}.
\]
But when dealing with the local factors, it seems to be more
convenient to denote each character as $z^{-N}\|z\|^t$ instead of
using $re^{i\theta}$, and hence we
will choose this convention.
Let us note that
\[
\overline{\chi_{-N,t}}=\chi_{N, t-N},
\]
where $\overline{\chi_{-N,t}}(z):=\overline{\chi_{-N,t}(z)}=\chi_{-N,t}(\bar{z})$ as usual.

If $F=\R$, each character is of the form
\[
\la_{\ep, t}(r):=r^{-\ep}|r|^t=\sign(r)^{\ep}|r|^{t-\ep},\quad r\in\R^\times,
\]
where $\ep\in\{0,1\}$, $t\in\C$ and $\sign$ is the sign character.

\subsection{Representations of $W_F$}
If $F=\C$, an irreducible representation
of $W_\C=\C^\times$ is 1-dimensional, namely a character on
$\C^\times$, and hence of the form $\chi_{-N,t}$ as above. Then in
general, an $n$-dimensional representation
$\varphi:W_\C\rightarrow\GL_n(\C)$ is of the form
\begin{equation}\label{E:general_parameter_C}
\varphi=\chi_{-N_1,t_1}\oplus\cdots\oplus\chi_{-N_n, t_n}.
\end{equation}

If $F=\R$, an irreducible representation of $W_\R$ is 1 or 2 dimensional. If it
is 1-dimensional, it factors through $W_\R^{ab}\cong\R^\times$ and
hence is identified with a character of the form $\la_{\ep, t}$. If it
is 2-dimensional, it is of the form
\[
\varphi_{-N,t}:=\Ind_{W_\C}^{W_\R}\chi_{-N,t}.
\]
From the definition of $W_\R$, we can see that
$\det(\varphi_{-N,t})(z)=\chi_{-N, t}(z\bar{z})=\|z\|^{-N+2t}$ for
$z\in\C^\times\subseteq W_\R$, and
$\det(\varphi_{-N,t})(j)=-\chi_{-N,t}(-1)=-(-1)^N$. Hence from
\eqref{E:W_R}, as a character on $\R^\times$ we can identify $\det(\varphi_{-N,t})$ with
$\sign\chi_{-N, t}$ where $\chi_{-N, t}$ is viewed as a character on $\R^\times$ via the
inclusion $\R^\times\subseteq\C^\times$. Namely
\begin{equation}\label{E:central_character}
r\mapsto\sign(r) r^{-N}|r|^{2t}.
\end{equation}

Let us mention that if $N=0$, the representation $\varphi_{-N,t}$ is not irreducible but
we have
\[
\varphi_{0, t}=\la_{0, t}\oplus\la_{1, t+1}.
\]
Also since
$\Ind_{W_\C}^{W_\R}\chi_{-N,t}=\Ind_{W_\C}^{W_\R}\overline{\chi_{-N,t}}$,
we have 
\begin{equation}\label{E:N>0}
\varphi_{-N,t}=\varphi_{N, t-N}.
\end{equation}
Hence we may assume
$N>0$. Namely the irreducible 2-dimensional representations of $W_\R$
are precisely the representations of the form $\varphi_{-N,t}$ with
$N>0$. In
general, an $n$-dimensional representation
$\varphi:W_\R\rightarrow\GL_n(\C)$ is of the form
\begin{equation}\label{E:general_parameter_R}
\varphi=\left(\la_{\ep_1,t_1}\oplus\cdots\oplus\la_{\ep_p,t_p}\right)\oplus
\left(\varphi_{-N_1,u_1}\oplus\cdots\oplus\varphi_{-N_q, u_q}\right)
\end{equation}
where $N_i>0$ for all $i$, and $p+2q=n$.

\subsection{$L$- and $\epsilon$-factors}
We will recall the $L$- and $\epsilon$-factors of those
representations. (Though we will not use the $\epsilon$-factors in our
proofs, we will include them for completeness.)
Assume $F=\C$. Then the $L$- and $\epsilon$-factors of the character
$\chi_{-N,t}$ are defined as
\begin{align}
\notag L(\chi_{-N, t})&=2(2\pi)^{-(t-\frac{N}{2}+\frac{|N|}{2})}\Gamma(t-\frac{N}{2}+\frac{|N|}{2})\\
\label{E:L-factor-C}&=\begin{cases}2(2\pi)^{-t}\Gamma(t),\quad\text{if
    $N\geq 0$};\\
2(2\pi)^{-(t-N)}\Gamma(t-N),\quad\text{if $N<0$},
\end{cases}\\
\label{E:ep-factor-C}\epsilon(\chi_{-N,t},\psi_\C)&=i^{|N|}.
\end{align}
Let us note that
\begin{equation*}
L(\chi_{-N, t})=L(\overline{\chi_{-N, t}})\quad\text{and}\quad
\epsilon(\chi_{-N,t},\psi_\C)=\epsilon(\overline{\chi_{-N,t}},\psi_\C).
\end{equation*}
In general, if $\varphi:W_\C\rightarrow\GL_n(\C)$ is an
$n$-dimensional representation as in \eqref{E:general_parameter_C}, we
define the local factors multiplicatively as
\begin{align*}
L(\varphi)&=L(\chi_{-N_1,t_1})\cdots L(\chi_{-N_n, t_n}),\\
\epsilon(\varphi,\psi_\C)
&=\epsilon(\chi_{-N_1,t_1}, \psi_\C)\cdots\epsilon(\chi_{-N_n, t_n}, \psi_\C).
\end{align*}

Assume $F=\R$. For the 1-dimensional $\la_{\ep, t}$, 
\begin{align}
\label{E:L-factor-R1}L(\la_{\ep,t})&=\pi^{-\frac{t}{2}}\Gamma\left(\frac{t}{2}\right),\\
\label{E:ep-factor-R1}\epsilon(\la_{\ep,t},\psi_\R)&=(-i)^\ep.
\end{align}
For the 2-dimensional representation $\varphi_{-N,t}$,
\begin{align}
\label{E:L-factor-R2}L(\varphi_{-N,t})&=L(\chi_{-N,t})
=\begin{cases}2(2\pi)^{-t}\Gamma(t),\quad\text{if $N\geq 0$};\\
2(2\pi)^{-(t-N)}\Gamma(t-N),\quad\text{if $N<0$},
\end{cases}\\
\label{E:ep-factor-R2}\epsilon(\varphi_{-N,t},\psi_\R)&=-i\cdot\epsilon(\chi_{-N,t},\psi_\C)=-i^{|N|+1}.
\end{align}
In general, if $\varphi:W_\R\rightarrow\GL_n(\C)$ is an
$n$-dimensional representation as in \eqref{E:general_parameter_R}, we
again define the local factors multiplicatively as
\begin{align*}
L(\varphi)&=L(\la_{\ep_1,t_1})\cdots L(\la_{\ep_p,t_p})\cdot 
L(\varphi_{-N_1,u_1})\cdots L(\chi_{-N_q, u_q})\\
\epsilon(\varphi,\psi_\R)
&=\epsilon(\la_{\ep_1,t_1}, \psi_\R)\cdots\epsilon(\la_{\ep_p,t_p}, \psi_\R)\cdot 
\epsilon(\varphi_{-N_1,u_1}, \psi_\R)\cdots \epsilon(\chi_{-N_q, u_q},\psi_\R).
\end{align*}
Let us note that for the parameter $\varphi_{0, t}$, we can check that $L(\varphi_{0,
  t})=L(\chi_{0, t})=2(2\pi)^{-t}\Gamma(t)$, which is indeed equal to
$L(\la_{0,t})L(\la_{1,t+1})=\pi^{-t/2}\Gamma(\frac{t}{2})\cdot\pi^{-(t+1)/2}\Gamma(\frac{t+1}{2})$
by the duplication formula. Also
one can check that
$\epsilon(\varphi_{0,t})=-i\cdot\epsilon(\chi_{0,t},\psi_\C)=-i$,
which is indeed equal to $\epsilon(\la_{0,t},\psi_\R)\epsilon(\la_{1,t+1},\psi_\R)=-i$

\subsection{$\GL(1)$-twist}
Assume $F=\C$. Let $\chi_{-M,s}$ be a character on
$\C^\times$, and let $\varphi$ be an $n$-dimensional representation of
$W_\C$ as in \eqref{E:general_parameter_C}. Then the twist
$\varphi\otimes\chi_{-M,s}$ by  $\chi_{-M,s}$ is given by
\begin{equation}\label{E:twisted_general_parameter_C}
\varphi\otimes\chi_{-M,s}
=\chi_{-(N_1+M), t_1+s}\oplus\cdots\oplus\chi_{-(N_n+M), t_n+s}.
\end{equation}

Assume $F=\R$. Let $\la_{\delta, s}$ be a character on $\R^\times$. For
the 1-dimensional parameter $\la_{\ep,t}$, the twist by $\la_{\delta,s}$ is given by
\[
\la_{\ep, t}\otimes\la_{\delta,s}=
\la_{\ep+\delta\,(\operatorname{mod} 2),\, t+s-\gamma},
\;\text{ where }\;
\gamma=\begin{cases}2\quad\text{if $\ep=\delta=1$};\\
0,\quad\text{otherwise}.
\end{cases}
\]
For the 2-dimensional parameter
$\varphi_{-N,t}=\Ind_{W_\C}^{W_\R}\chi_{-N, t}$, the twisted parameter
$\varphi_{-N,t}\otimes\la_{\delta,s}$ is computed as
\begin{align*}
\varphi_{-N,t}\otimes\la_{\delta,s}
&=\Ind_{W_\C}^{W_\R}(\chi_{-N, t}\otimes(\la_{\delta, s}\circ
N_{\C/\R}))\\
&=\Ind_{W_\C}^{W_\R}(\chi_{-N, t}\otimes\chi_{0, s-\delta})\\
&=\Ind_{W_\C}^{W_\R}\chi_{-N, t+s-\delta},
\end{align*}
namely
\[
\varphi_{-N, t}\otimes\la_{\delta, s}=\varphi_{-N, t+s-\delta}.
\]
Accordingly, we have
\begin{align}
\label{E:GL(2)xGL(1)_L}L(\varphi_{-N, t}\otimes\la_{\delta, s})&=\begin{cases}
2(2\pi)^{-(t+s-\delta)}\Gamma(t+s-\delta),\quad\text{if $N\geq 0$};\\
2(2\pi)^{-(t+s-\delta-N)}\Gamma(t+s-\delta-N) \quad\text{if $N<0$};
\end{cases}\\
\label{E:GL(2)xGL(2)_ep}\epsilon(\varphi_{-N, t}\otimes\la_{\delta, s}, \psi_\R)&=-i^{|N|+1}.
\end{align}

If $\varphi:W_\R\rightarrow\GL_n(\C)$ is an $n$-dimensional
representation as
in \eqref{E:general_parameter_R}, we have
\[
\varphi\otimes\la_{\delta,  s}=
\left(\la_{\delta_1,t_1+s-\gamma_1} \oplus\cdots\oplus\la_{\delta_p,t_p+s-\gamma_p}\right)\oplus
\left(\varphi_{-N_1,u_1+s-\delta}\oplus\cdots\oplus\varphi_{-N_q, u_q+s-\delta}\right),
\]
where $\delta_i=\ep_i+\delta\pmod{2}$, and $\gamma_i=2$ if
$\ep_i=\delta=1$ and $\gamma_i=0$ otherwise.

\subsection{$\GL(2)$-twist}
Assume $F=\R$. Let $\varphi_{-N,t}$ and $\varphi_{-M,s}$ be
2-dimensional representations of $W_\R$ as above with $N, M >  0$. Then one can see
\begin{align*}
\varphi_{-N,t}\otimes\varphi_{-M,s}=&
\left(\Ind_{W_\C}^{W_\R}\chi_{-N,t}\right)\otimes
\left(\Ind_{W_\C}^{W_\R}\chi_{-M,s}\right)\\
=&\left(\Ind_{W_\C}^{W_\R}\chi_{-N,t}\cdot\chi_{-M,s}\right)\oplus
\left(\Ind_{W_\C}^{W_\R}\chi_{-N,t}\cdot\overline{\chi_{-M,s}}\right)\\
=&\left(\Ind_{W_\C}^{W_\R}\chi_{-(N+M),t+s}\right)\oplus
\left(\Ind_{W_\C}^{W_\R}\chi_{-N,t}\cdot\chi_{M,s-M}\right)\\
=&\varphi_{-(N+M), t+s}\oplus
\left(\Ind_{W_\C}^{W_\R}\chi_{-(N-M),t+s-M}\right)\\
=&\varphi_{-(N+M), t+s}\oplus\varphi_{-(N-M), t+s-M}.
\end{align*}
Of course it is possible that $N=M$, so that $\varphi_{-(N-M), t+s-M}$ is reducible.  However, let us note that for the parameter $\varphi_{0, t}$, we can check that $L(\varphi_{0,
  t})=L(\chi_{0, t})=2(2\pi)^{-t}\Gamma(t)$, since it is indeed equal to
$L(\la_{0,t})L(\la_{1,t+1})=\pi^{-t/2}\Gamma(\frac{t}{2})\cdot\pi^{-(t+1)/2}\Gamma(\frac{t+1}{2})$
by the duplication formula. Also
one can check that
$\epsilon(\varphi_{0,t})=-i\cdot\epsilon(\chi_{0,t},\psi_\C)=-i$,
which is indeed equal to $\epsilon(\la_{0,t},\psi_\R)\epsilon(\la_{1,t+1},\psi_\R)=-i$.  Then, coupled together with the identity 
\[
\varphi_{-(N-M), t+s-M}=\varphi_{-(M-N), t+s-N}
\]
(by \eqref{E:N>0}), we obtain
\begin{align}
\label{E:GL(2)xGL(2)_L}
  L(\varphi_{-N,t}\otimes\varphi_{-M,s})&=4(2\pi)^{-2(t+s)+\min\{N, M\}}\cdot\Gamma(t+s)\cdot
\Gamma(t+s-\min\{N, M\})\\
\label{E:GL(2)xGL(2)_ep}\epsilon(\varphi_{-N,t}\otimes\varphi_{-M,s})&=(-1)^{\max\{N, M\}}.
\end{align}

\subsection{Local Langlands correspondence for $\GL_n(F)$}
By the archimedean local Langlands correspondence, originally
established by Langlands (\cite{Langlands}), there is a one-to-one
correspondence between the set $\Irr_n$ and the set $\Phi_n$ of all continuous
$n$-dimensional representations of $W_F$. If $\pi_1\in\Irr_n$ and
$\pi_2\in\Irr_t$ correspond to $\varphi_1\in\Phi_n$ and $\varphi_2\in\Phi_t$,
respectively, then the local factors are defined to be
\begin{align*}
L(s,\pi_1\times\pi_2)&=L(\varphi_1\otimes\varphi_2|\cdot|_F^s)\\
\epsilon(s,\pi_1\times\pi_2,\psi_F)&=
\epsilon(\varphi_1\otimes\varphi_2|\cdot|_F^s,\psi_F),
\end{align*}
where $|\cdot|_{\C}=\|\cdot\|$ and $|\cdot|_{\R}=|\cdot|$. Further if
$\varphi\in\Irr_n$ corresponds to $\varphi_\pi\in\Phi_n$ and if $\omega_\pi$
is the central character of $\pi$, we have
\[
\omega_\pi=\det(\varphi_\pi).
\]
In particular, if $\pi$ is a representation of $\GL_2(\R)$ such that
$\varphi_\pi=\varphi_{-N,t}$, then $\omega_\pi$ is as in \eqref{E:central_character}.


\section{A partial order}


In this section we will define a certain partial order on complex numbers,
which will play a crucial role in our proof. 

\subsection{Definition}
We define a partial order $\preceq$ on the set $\C$ of
complex numbers as follows: For $t_1,t_2\in\C$, 
\[
t_1\preceq t_2\quad\text{if}\quad t_2-t_1\in\Z^{\geq 0},
\]
and otherwise we say they are \emph{incomparable}. We use the symbol $\prec$ for
strict inequality, namely
\[
t_1\prec t_2\quad\text{if}\quad t_1\preceq t_2\text{ but } t_1\neq t_2.
\]
Then one can check that $t_1\prec t_2$ if
and only if
\begin{equation*}
\{\text{poles of $\Gamma(s+t_1)$}\}\supsetneq \{\text{poles of $\Gamma(s+t_2)$}\},
\end{equation*}
and $t_1$ and $t_2$ are incomparable if and only if
\begin{equation*}
\{\text{poles of $\Gamma(s+t_1)$}\}\cap \{\text{poles of $\Gamma(s+t_2)$}\}=\emptyset.
\end{equation*}

If $A$ is a finite set of complex numbers and $t\in A$
is a minimal element in $A$ (\ie whenever $t'\in A$ is such that
$t'\preceq t$, we have $t'=t$), we say $t$ is {\it minimal in}
$A$. Of course, a minimal element might not
be unique.

Let $F(s)$ be a function on $\C$.  If $F(s)$ has a pole at $s=t$ and
if it is maximal among the poles of $F(s)$ with respect to $\preceq$
(\ie if $F(s)$ has a pole at $s=t'$ and
$t\preceq t'$, then $t=t'$), then we call the pole at $s=t$ a {\it
  maximal pole} of $F(s)$. In particular, for a fixed $t\in\C$, the gamma function
$\Gamma(s+t)$ has a unique maximal pole at $s=-t$. More generally, a
product $\prod_{i=1}^n\Gamma(s+t_i)$ of gamma functions has a maximal
pole precisely at $s=-t_i$ where $t_i$ is a minimal
element in the set $\{t_1,\dots,t_n\}$. Of course, again, a maximal
pole is not necessarily unique in general.

\subsection{A lemma}
The following lemma will be repeatedly used throughout.
\begin{Lem}\label{L:Lemma}
Let $t_1,\dots, t_m, t'_1,\dots, t'_{m'}\in\C$. Suppose
\begin{equation}\label{E:Lemma}
F(s)\cdot\prod_{i=1}^m\Gamma(s+t_i)=\prod_{j=1}^{m'}\Gamma(s+t'_j)
\end{equation}
as functions in $s$, where $F(s)$ is a function on $\C$ whose zeros and poles
do not interfere with the poles of the $\Gamma(s+t_i)$'s and
$\Gamma(s+t'_i)$'s. Then $m=m'$, and further for each $i\in\{1,\dots,m\}$,
there is a corresponding $j\in\{1,\dots,m'\}$ such that
$t_i=t'_j$, namely
\[
\{t_1,\dots,t_m\}=\{t'_1,\dots,t'_{m'}\}
\]
 as multisets. Accordingly, $F\equiv 1$.
\end{Lem}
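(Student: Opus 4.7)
The plan is to induct on $m+m'$, at each step matching an ``outermost'' gamma factor on the two sides by comparing maximal poles under $\preceq$. The base case $m+m'=0$ is immediate, with $F\equiv 1$.

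For the inductive step, I choose $t^*$ to be a minimal element, with respect to $\preceq$, of the combined multiset $\{t_1,\dots,t_m\}\cup\{t'_1,\dots,t'_{m'}\}$. Without loss of generality assume $t^*=t_{i_0}$ for some $i_0\in\{1,\dots,m\}$. Then $\Gamma(s+t_{i_0})$ has a pole at $s=-t_{i_0}$, and by minimality of $t^*$, no other $\Gamma(s+t_i)$ in the left-hand product can contribute a cancellation at $s=-t_{i_0}$ (the gamma functions have no zeros, and in any case any $t_i$ contributing to this location would satisfy $t_i\preceq t_{i_0}$, forcing $t_i=t_{i_0}$). The non-interference hypothesis on $F$ then ensures that the left-hand side has a genuine pole at $s=-t_{i_0}$. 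Hence the right-hand side must too, so some $\Gamma(s+t'_{j_0})$ has a pole there, i.e.\ $t'_{j_0}\preceq t_{i_0}$; but the minimality of $t^*$ in the union forces $t'_{j_0}=t_{i_0}$.

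Having paired $t_{i_0}$ with $t'_{j_0}$, I divide both sides of \eqref{E:Lemma} by $\Gamma(s+t_{i_0})=\Gamma(s+t'_{j_0})$ to obtain a shorter identity of the same form (and the non-interference hypothesis on $F$ is preserved). The inductive hypothesis gives $m-1=m'-1$ and an equality of the remaining multisets, which combined with $t_{i_0}=t'_{j_0}$ yields $m=m'$ and $\{t_1,\dots,t_m\}=\{t'_1,\dots,t'_{m'}\}$. Once the multisets agree, \eqref{E:Lemma} reads $F(s)\cdot G(s)=G(s)$ with $G(s)=\prod_i\Gamma(s+t_i)$ nowhere zero, so $F(s)=1$ wherever $G$ is finite, and then $F\equiv 1$ by the regularity hypothesis at the remaining (pole) locations.

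The only real subtlety is making sure the pairing produces an \emph{equality} $t'_{j_0}=t_{i_0}$ rather than merely $t'_{j_0}\preceq t_{i_0}$; this is exactly why $t^*$ has to be chosen minimal in the \emph{combined} multiset, not just in $\{t_1,\dots,t_m\}$. With that choice the induction closes cleanly, and no delicate accounting of pole orders or multiplicities is needed.
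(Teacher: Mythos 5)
Your proof is correct and follows essentially the same route as the paper: compare poles via the partial order $\preceq$, match one gamma factor on each side, cancel, and induct. The only (harmless) variation is that you take a minimal element of the combined multiset rather than working with maximal poles of each side separately, which if anything makes the matching step slightly cleaner.
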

\begin{proof}
This is almost immediate if one looks at the maximal poles of both
sides. Namely, if $t_k$ is minimal in $\{t_1,\cdots,t_m\}$, the left
hand side has a maximal pole at $s=-t_k$, and hence the right hand
side has a maximal pole at $s=-t_k$. But
each maximal pole of the right hand side occurs at $s=-t'_l$ for some
$t'_l$ which is minimal in $\{t'_1,\dots,t'_{m'}\}$. Hence $t_k=t'_l$ for some $l$. After reordering
the indices, if necessary, we may assume $t_1=t'_1$. Hence
\eqref{E:Lemma} is written as
\[
F(s)\cdot\prod_{i=2}^m\Gamma(s+t_i)=\prod_{j=2}^{m'}\Gamma(s+t'_j).
\]
By arguing inductively, one proves the lemma.
\end{proof}


\section{Proof for complex case}


We will prove our main theorem for $F=\C$. Let $\pi$ and $\pi'$ be
infinitesimal equivalence classes of irreducible admissible
representations of $\GL_n(\C)$, and assume that their corresponding
Langlands parameters $\varphi$ and $\varphi'$ are given by
\begin{align*}
\varphi&=\chi_{-N_1,t_1}\oplus\cdots\oplus\chi_{-N_n, t_n}\\
\varphi'&=\chi_{-N'_1,t'_1}\oplus\cdots\oplus\chi_{-N'_n, t'_n}.
\end{align*}
The assertion $L(s, \pi\times\tau)=L(s, \pi'\times\tau)$ for all
$\tau\in\Irr_1$ is the same as
\begin{equation}\label{E:L-equality-C0}
L(\varphi\otimes\chi_{-M, s})=L(\varphi'\otimes\chi_{-M, s})
\end{equation}
for all $M\in\Z$ and $s\in\C$. As in
\eqref{E:twisted_general_parameter_C}, we have
\begin{align*}
\varphi\otimes\chi_{-M,s}&=\chi_{-(N_1+M),t_1+s}\oplus\cdots\oplus\chi_{-(N_n+M), t_n+s}\\
\varphi'\otimes\chi_{-M,s}&
=\chi_{-(N'_1+M),t'_1+s}\oplus\cdots\oplus\chi_{-(N'_n+M),t'_n+s},
\end{align*}
and hence the equality \eqref{E:L-equality-C0} implies
\begin{equation}\label{E:L-equality-C}
\prod_{i=1}^nL(\chi_{-(N_i+M),t_i+s})
=\prod_{j=1}^nL(\chi_{-(N'_j+M),t'_j+s})
\end{equation}
for all $M\in\Z$ and $s\in\C$.

We will show \eqref{E:L-equality-C} implies $\varphi=\varphi'$. But
once we show that $\varphi$ and $\varphi'$ have
a common constituent, the theorem follows by induction. Namely, it
suffices to prove
\begin{equation}\label{E:goal}
\chi_{-N_i, t_i}=\chi_{-N'_j, t'_j}
\end{equation}
for some $i, j\in\{1,\dots,n\}$.

The basic idea of our proof is that we will examine ``maximal poles'' of
both sides of \eqref{E:L-equality-C} for
different choices of $M$. Our proof has three major steps, which is
outlined as follows. In Step 1, we will show that $\{t_1,\dots, t_n\}=\{t'_1,\dots,
t'_n\}$ as multisets. In Step 2, we will show that
$\min\{N_1,\dots,N_n\}=\min\{N'_1,\dots,N'_n\}$. Finally in Step 3, we
will finish up the proof of the theorem. In each of the steps we
will choose an appropriate $M$.

\subsection{Step 1}
Let us choose $M$ large enough so that $N_i+M> 0$ and
$N'_j+M>0$
for all $i, j\in\{1,\cdots,n\}$. By \eqref{E:L-factor-C}, we can write
\eqref{E:L-equality-C} as
\[
F(s)\prod_{i=1}^n\Gamma(s+t_i)=\prod_{j=1}^n\Gamma(s+t'_j)
\]
for some function $F(s)$ without a pole or zero. Hence by Lemma
\ref{L:Lemma}, we have
\[
\{t_1,\dots, t_n\}=\{t'_1,\dots, t'_n\}
\]
as multisets. 

\subsection{Step 2}
Let us first note that we can now reorder the indices $i$
and $j$ in such a way that
\begin{equation}\label{E:relation2}
t_1=t'_1, t_2=t'_2,\dots, t_n=t'_n.
\end{equation}
Let
\begin{align*}
N_{\min}:=&\min\{N_1,\dots,N_n\}\\
N'_{\min}:=&\min\{N'_1,\dots,N'_n\}.
\end{align*}
As we mentioned above, we will show $N_{\min}=N'_{\min}$ in this step. 
First assume $N'_{\min}<N_{\min}$. Choose
$M=-N'_{\min}-1$. Then \eqref{E:L-equality-C} is written as
\begin{equation}\label{E:L-equality-C1}
F(s)\prod_{i=1}^n\Gamma(s+t_i)=\prod_{j=1}^n\Gamma(s+t'_j+\ep_j),
\end{equation}
where 
\[
\ep_j=\begin{cases}1,\quad\text{if $N'_j=N'_{\min}$};\\
0,\quad\text{if $N'_j>N'_{\min}$},
\end{cases}
\]
and $\ep_j\neq 0$ for at least one $j$.
Now the left hand side of \eqref{E:L-equality-C1} has a maximal pole
at $s=-t_k$ for some $t_k$ which is minimal in
$\{t_1,\dots,t_n\}$. Then $t_k=t'_l+\ep_l$ for some $l$ and
$t'_l+\ep_l$ is minimal in
$\{t'_1+\ep_1,\dots,t'_n+\ep_n\}$. Assume $\ep_l=1$. Then we have
$t'_l+\ep_l=t_l+1$ by \eqref{E:relation2}, which implies
$t_l+1=t_k$. But then $t_l\prec t_k$, which
contradicts the minimality of $t_k$. Hence
$\ep_l=0$, namely $t_k=t'_l$. Then
\eqref{E:L-equality-C1} is now written as
\begin{equation}\label{E:L-equality-C2}
F(s)\prod_{i\neq k}\Gamma(s+t_i)=\prod_{j\neq l}\Gamma(s+t'_j+\ep_j).
\end{equation}
by cancelling $\Gamma (s+t_k)$ from the left and $\Gamma (s+t'_l)$ from
the right. By arguing inductively, one can
show that $\ep_j=0$ for all $j\in\{1,\dots,n\}$ which is a
contradiction. Hence we must have $N'_{\min}\geq N_{\min}$. By
symmetry, we can show that $N'_{\min}\leq N_{\min}$. Hence
$N_{\min}=N'_{\min}$.

\subsection{Step 3}
Finally, we will show \eqref{E:goal} for some $i$ and $j$, which will
complete the proof of the theorem. For this
purpose, let us reorder the indices so that 
\begin{equation}\label{E:relation1}
N_1=N_2=\cdots=N_d<N_{d+1}\leq\cdots\leq N_n,
\end{equation}
while still keeping the relation \eqref{E:relation2}. Such indexing is
certainly possible. Now let us twist $\varphi$ and $\varphi'$ by $\chi_{-M,s}$ with
$M=-N_1-1=-N'_{\min}-1$. Then \eqref{E:L-equality-C} is now written as
\begin{equation}\label{E:L-equality-C3}
F(s)\prod_{i=1}^d\Gamma(s+t_i+1)\cdot\prod_{i=d+1}^n\Gamma(s+t_i)
=\prod_{j=1}^n\Gamma(s+t'_j+\ep_j)
\end{equation}
where $\ep_j$ is as before and $\ep_j=1$ for at least one $j$. Let
\begin{align*}
A&:=A_1\cup A_2, \quad A_1:=\{t_i+1:i\in\{1,\dots,d\}\},\quad A_2:=\{t_i:i\in\{d+1,\dots,n\}\}\\
B&:=\{t'_j+\ep_j: j\in\{1,\dots,n\}\}.
\end{align*}
Of course by Lemma \ref{L:Lemma}, we know $A=B$ as multisets. 
We will again look at ``maximal poles'' of
\eqref{E:L-equality-C3}, which correspond to minimal elements of $A=B$.
We consider
the following two cases, depending on whether a minimal element is in
$A_1$ or $A_2$.

First assume $t_k+1\in A_1$ with $k\leq d$ is minmal in
$A$. By $A=B$, we have $t_k+1=t'_l+\ep_l$ for some $l$, which is
minimal in $B$. Assume $\ep_l=1$. Then this
means $N'_l=N_1=N_k$ and $t_k=t'_l$. Hence we have
\eqref{E:goal} with $i=k$ and $j=l$. Assume $\ep_l=0$. First assume $\ep_k=0$. Then
$t'_k+\ep_k=t_k=t'_l+\ep_l-1$, namely $t'_k+\ep_k\prec
t'_l+\ep_l$, which contradicts the minimality of
$t'_l+\ep_l$. Hence $\ep_k=1$, which implies $N'_k=N_1=N_k$. Since
$t_k=t'_k$ by \eqref{E:relation2}, the equality \eqref{E:goal} is satisfied with $i=j=k$.

Next assume that $t_k\in A_2$ with $k>d$ is minmal in
$A$. By $A=B$,
we know that $t_k=t'_l+\ep_l$ for some $l$, which is minimal in
$B$. Assume $\ep_l=1$, so $t_k=t'_l+1=t_l+1$. Then
$t_l\prec t_k$. Since $t_k$ is minimal in
$A$, we must have $l\leq d$. Hence
$N_l=N_1$. Also since $\ep_l=1$ implies $N'_l=N_1$, we have
$N'_l=N_l$. Hence, we have
\eqref{E:goal} with $i=j=l$. Finally assume 
$\ep_l=0$, \ie $t_k=t'_l$. Then we
can reorder the indices $j$ for the multiset $B$ by swapping $k$ and
$l$ without changing the relations \eqref{E:relation2} and
\eqref{E:relation1}. So we can cancel $\Gamma(s+t_k)$ from both
sides of \eqref{E:L-equality-C3}. We can now argue
inductively by repeating the above arguments until we obtain
\eqref{E:goal}.

The proof is complete.


\section{Proof for real case}


We will prove our main theorem for $F=\R$. Assume $\pi, \pi'\in\Irr_n$ are
such that $L(s, \pi\times\tau)=L(s,\pi'\times\tau)$ for all
$\tau\in\Irr_t$ with $t=1,2$. Namely if
$\varphi$ and $\varphi'$ are the corresponding Langlands parameters,
we have
\begin{equation}\label{E:character-twist}
L(\varphi\otimes\la_{\ep,s})=L(\varphi'\otimes\la_{\ep,s})
\end{equation}
for all $\ep\in\{0,1\}$ and $s\in\C$, and 
\begin{equation}\label{E:GL(2)-twist}
L(\varphi\otimes\varphi_{-M,s})=L(\varphi'\otimes\varphi_{-M,s})
\end{equation}
for all $M\in\Z$ and $s\in\C$. 

Let us write 
\begin{align*}
\varphi&=\left(\la_{\ep_1,t_1}\oplus\cdots\oplus\la_{\ep_p,t_p}\right)\oplus
\left(\varphi_{-N_1,u_1}\oplus\cdots\oplus\varphi_{-N_q, u_q}\right)\\
\varphi'&=\left(\la_{\ep'_1,t'_1}\oplus\cdots\oplus\la_{\ep'_{p'},t'_{p'}}\right)\oplus
\left(\varphi_{-N'_1,u'_1}\oplus\cdots\oplus\varphi_{-N'_{q'} u'_{q'}}\right)
\end{align*}
where we may assume $N_i>0$ and $N'_j>0$ for all $i, j$. Note that
the numbers of 1-dimensional constituents for $\varphi$ and $\varphi'$
are, respectively, $p$ and $p'$, and those for 2-dimensional ones are,
respectively, $q$ and $q'$, so $n=p+2q=p'+2q'$.\\

The proof has three steps, and the basic philosophy is the same as the
complex case in that we choose appropriate twists and examine
``maximal poles''. In Step 1, we will show $\{u_1, ..., u_q\}=\{u'_1, ..., u'_{q'} \}$ and 
$\{t_1-\ep_1, ..., t_p-\ep_p\}=\{t'_1-\ep'_1, ..., t'_{p'}-\ep'_{p'}\}$ as multisets, which also implies $p=p'$ and
$q=q'$. In Step 2, we will show
the 1-dimensional constituents are equal. Finally in Step 3, we will show
the 2-dimensional constituents are equal.

\subsection{Step 1}
Consider the twist by $\varphi_{-M, s}$ with $M\geq 0$. Since
all the $N_i$'s and $N'_j$'s are positive, \eqref{E:GL(2)xGL(1)_L} and
\eqref{E:GL(2)xGL(2)_L} give
\begin{align*}
L(\varphi\otimes\varphi_{-M,  s})&=F(s)
\prod_{i=1}^{p}\Gamma(s+t_i-\ep_i)\cdot
\prod_{i=1}^{q}\Gamma(s+u_i)\cdot\Gamma(s+u_i-\min\{M, N_i\})\\
L(\varphi'\otimes\varphi_{-M,  s})&=F'(s)
\prod_{j=1}^{p'}\Gamma(s+t'_j-\ep'_j)\cdot
\prod_{j=1}^{q'}\Gamma(s+u'_j)\cdot\Gamma(s+u'_j-\min\{M, N'_j\}),
\end{align*}
where $F(s)$ and $F'(s)$ are functions without a zero or pole. And the
equality \eqref{E:GL(2)-twist} implies those two are equal. For the
choices $M=0$ (so $\min\{M, N_i\} =\min\{M, N'_j\}=0$) and $M=1$
(so $\min\{M, N_i\} =\min\{M, N'_j\}=1$), Lemma \ref{L:Lemma} implies
\begin{align}\label{E:multisets_t_u-0}
&\{t_1-\ep_1,\dots,t_p-\ep_p\}\cup\{u_1,\dots,u_q\}\cup\{u_1,\dots,u_q\}\\
\notag=&\{t'_1-\ep'_1,\dots,t'_{p'}-\ep'_{p'}\}\cup\{u'_1,\dots,u'_{q'}\}\cup\{u'_1,\dots,u'_{q'}\},
\end{align}
and
\begin{align}\label{E:multisets_t_u-1}
&\{t_1-\ep_1,\dots,t_p-\ep_p\}\cup\{u_1,\dots,u_q\}\cup\{u_1-1,\dots,u_q-1\}\\
\notag=&\{t'_1-\ep'_1,\dots,t'_{p'}-\ep'_{p'}\}\cup\{u'_1,\dots,u'_{q'}\}
\cup\{u'_1-1,\dots,u'_{q'}-1\}
\end{align}
as multisets. In what follows, by using \eqref{E:multisets_t_u-0} and \eqref{E:multisets_t_u-1}, we will show
\begin{align}\label{E:multisets_t-ep}
\{t_1-\ep_1,\dots,t_p-\ep_p\}&=\{t'_1-\ep'_1,\dots,t'_{p'}-\ep'_{p'}\}\\
\label{E:multisets_u}
\{u_1,\dots,u_q\}&=\{u'_1,\dots,u'_{q'}\}
\end{align}
as multisets, which would also imply $p=p'$ and $q=q'$.

The basic idea is to argue inductively by comparing a minimal element in both sides of
\eqref{E:multisets_t_u-0}, and after each step we will shrink the
multisets in both \eqref{E:multisets_t_u-0} and
\eqref{E:multisets_t_u-1} by eliminating the minimal element.

First assume $t_k-\ep_k$ is minimal in the left hand side of
\eqref{E:multisets_t_u-0}. Then either
$t_k-\ep_k=t'_l-\ep'_{l'}$ for some $l$, or $t_k-\ep_k=u'_l$. If the
former is the case, \ie
\[
t_k-\ep_k=t'_l-\ep'_l,
\]
we can eliminate $t_k-\ep_k$ from the left hand
sides of both \eqref{E:multisets_t_u-0} and \eqref{E:multisets_t_u-1}
and eliminate $t'_l-\ep'_l$ from the right hand
sides of both \eqref{E:multisets_t_u-0} and \eqref{E:multisets_t_u-1},
which allows us to shrink the multisets in those two equalities and obtain
\begin{align*}
&\{t_i-\ep_i: i\neq k\}\cup\{u_1,\dots,u_q\}\cup\{u_1,\dots,u_q\}\\
\notag=&\{t'_j-\ep'_j: j\neq l\}\cup\{u'_1,\dots,u'_{q'}\}\cup\{u'_1,\dots,u'_{q'}\},
\end{align*}
and
\begin{align*}
&\{t_i-\ep_i:i\neq k\}\cup\{u_1,\dots,u_q\}\cup\{u_1-1,\dots,u_q-1\}\\
\notag=&\{t'_j-\ep'_j: j\neq l\}\cup\{u'_1,\dots,u'_{q'}\}
\cup\{u'_1-1,\dots,u'_{q'}-1\}.
\end{align*}
If the latter is the case, namely if $t_k-\ep_k=u'_l$, then 
by \eqref{E:multisets_t_u-1}, 
$u'_l-1=t_k-\ep_k-1$ is a minimal element in both sides of
\eqref{E:multisets_t_u-1}. But since $t_k-\ep_k$ is minimal in the
left hand side of \eqref{E:multisets_t_u-0}, $t_k-\ep_k-1$, which is
strictly smaller than $t_k-\ep_k$, has to be among
$\{u_1-1,\dots,u_q-1\}$. Namely,
$t_k-\ep_k-1=u_m-1$ for some $m$, which implies 
\[
u_m=u'_l.
\] 
Hence by
eliminating $u_m$ and $u'_l$, we can shrink both \eqref{E:multisets_t_u-0} and
\eqref{E:multisets_t_u-1} and obtain
\begin{align*}
&\{t_1-\ep_1,\dots,t_p-\ep_p\}\cup\{u_i:i\neq m\}\cup\{u_i:i\neq m\}\\
\notag=&\{t'_1-\ep'_1,\dots,t'_{p'}-\ep'_{p'}\}\cup\{u'_j:j\neq
l\}\cup\{u'_j:j\neq l\},
\end{align*}
and
\begin{align*}
&\{t_1-\ep_1,\dots,t_p-\ep_p\}\cup\{u_i:i\neq m\}\cup\{u_i-1:i\neq m\}\\
\notag=&\{t'_1-\ep'_1,\dots,t'_{p'}-\ep'_{p'}\}\cup\{u'_j:j\neq
l\}\cup\{u'_j-1:j\neq l\}.
\end{align*}

Next assume $u_k$ is a minimal element in the left hand side of
\eqref{E:multisets_t_u-0}. Then either $u_k=u'_l$ or
$u_k=t'_l-\ep_l'$ for some $l$. If the former is the case, one can shrink both sides
of \eqref{E:multisets_t_u-0} and \eqref{E:multisets_t_u-1} as
above. If the latter is the case, by arguing in the same way one can
obtain $u_k-1=u'_m-1$ for some $m$, which gives $u_k=u'_m$ and allows
us to shrink the multisets in \eqref{E:multisets_t_u-0} and
\eqref{E:multisets_t_u-1} as before.

To summarize, at each step one can obtain either
$t_i-\ep_i=t'_j-\ep'_j$ or $u_i=u'_j$ for some $i, j$, and by
eliminating this element from \eqref{E:multisets_t_u-0} and
\eqref{E:multisets_t_u-1}, one can shrink the multisets from those two
equalities. But the resulting equalities are of the same form, and
hence we can apply the same argument to those two shrunken
equalities. By repeating the process, one can obtain
\eqref{E:multisets_t-ep} and \eqref{E:multisets_u}, which also imply $p=p'$ and $q=q'$.

\subsection{Step 2}
We will show the 1-dimensional constituents of $\varphi$ and
$\varphi'$ are equal. By twisting by $\la_{0,s}$, we have
\begin{align*}
\varphi\otimes\lambda_{0, s}
&=\left(\lambda_{\ep_1, t_1+s}\oplus\cdots\oplus\lambda_{\ep_p,
  t_p+s}\right)
\oplus\left(\varphi_{-N_1, u_1+s}\oplus\cdots\oplus \varphi_{-N_q,
  u_q+s}\right)\\
\varphi'\otimes\lambda_{0, s}
&=\left(\lambda_{\ep'_1, t'_1+s}\oplus\cdots\oplus\lambda_{\ep'_p,
  t'_p+s}\right)
\oplus\left(\varphi_{-N'_1, u'_1+s}\oplus\cdots\oplus \varphi_{-N'_q,
  u'_q+s}\right),
\end{align*}
so the equality
$L(\varphi\otimes\la_{0,s})=L(\varphi'\otimes\la_{0,s})$
implies
\[
F(s)\prod_{i=1}^{p}\Gamma\left(\frac{s+t_i}{2}\right)\cdot\prod_{i=1}^q\Gamma(s+u_i)
=\prod_{j=1}^{p}\Gamma\left(\frac{s+t'_j}{2}\right)\cdot\prod_{j=1}^q\Gamma(s+u'_j),
\]
where $F(s)$ is a function without a zero or pole. Since we know
$\{u_1, ..., u_q\}=\{u'_1, ..., u'_q\}$ as multisets, we have
\[
F(s)\prod_{i=1}^{p}\Gamma\left(\frac{s+t_i}{2}\right)
=\prod_{j=1}^{p}\Gamma\left(\frac{s+t'_j}{2}\right),
\]
from which we have
\begin{equation}\label{E:multisets_t}
\{t_1,\dots,t_p\}=\{t'_1,\dots,t'_p\}
\end{equation}
as multisets, by Lemma \ref{L:Lemma}. 

By using \eqref{E:multisets_t-ep} and \eqref{E:multisets_t},
we will show that the 1-dimensional
constituents of $\varphi$ and $\varphi'$ are equal. But of course, by induction, it
suffices to show $\la_{\ep_i,t_i}=\la_{\ep'_j,t'_j}$ for some $i$ and
$j$. We will show this by examining minimal elements in those
multisets.

First, let us note that since we have \eqref{E:multisets_t}, we may
reorder the indices so that $t_i=t'_i$ for all
$i\in\{1,\dots,n\}$. Let $t_k=t'_k$ be minimal in
$\{t_1,\dots,t_p\}$. Clearly if $\ep_k=\ep'_k$, then $\la_{\ep_k,
  t_k}=\la_{\ep'_k, t'_k}$, so we are done. So let us assume
$\ep_k\neq\ep'_k$. By symmetry, we may assume $\ep_k=1$ and
$\ep'_k=0$. Note, then, that $t_k-\ep_k$ is
minimal in $\{t_1-\ep_1,\dots,t_p-\ep_p\}$. By
\eqref{E:multisets_t-ep}, we must have $t_k-\ep_k=t'_l-\ep'_l$ for some
$l$. Assume $\ep'_l= 0$. Then $t_k-1=t'_l$, which implies
$t'_l=t_l\prec t_k$. This contradicts the minimality of
$t_k$. Hence $\ep'_l=1=\ep_k$. Then $t_k-\ep_k=t'_l-\ep'_l$ implies
$t_l=t'_k$. Namely we have $\la_{\ep_k,t_k}=\la_{\ep'_l, t'_l}$.

 This completes the proof that $\varphi$ and $\varphi'$
have the same 1-dimensional constituents.

\subsection{Step 3}
Finally, we will show that the 2-dimensional constituents are
equal. Since we have already shown the 1-dimensional constituents are
equal, we may assume
\begin{align*}
\varphi&=\varphi_{-N_1,u_1}\oplus\cdots\oplus\varphi_{-N_q, u_q}\\
\varphi'&=\varphi_{-N'_1,u'_1}\oplus\cdots\oplus\varphi_{-N'_{q'} u'_{q}}.
\end{align*}
Then the proof is very similar to the complex case. First note that
since we already have \eqref{E:multisets_u}, we can
reorder the indices in such a way that
\begin{equation}\label{E:relation2-R}
u_1=u'_1, u_2=u'_2,\dots, u_n=u'_n.
\end{equation}
Let us write
\begin{align*}
N_{\min}&=\min\{N_1,\dots, N'_q\}\\
N'_{\min}&=\min\{N'_1,\dots, N'_q\},
\end{align*}
and we will show $N'_{\min}=N_{\min}$. Assume
$N'_{\min}<N_{\min}$, and consider the twist by $\varphi_{-M, s}$
with $M=N_{\min}$. Then the equality
$L(\varphi\otimes\varphi_{-N_{\min},s})=L(\varphi'\otimes\varphi_{-N_{\min},s})$ is
written as
\begin{equation}\label{E:M=M'_j}
F(s)\prod_{i=1}^q\Gamma(s+u_i-N_{\min})=\prod_{j=1}^q\Gamma(s+u'_j-M'_j),
\quad M'_j=\min\{N'_j, N_{\min}\},
\end{equation}
for some $F(s)$ without a zero or a pole, where we used \eqref{E:GL(2)xGL(2)_L}.
Let $J=\{j\in\{1,\dots,n\}: N'_j<N_{\min}\}$. By our assumption, $J$ is not
empty. Since we have \eqref{E:relation2-R}, the equality
\eqref{E:M=M'_j} simplifies to
\begin{equation}\label{E:M=M'_j2}
F(s)\prod_{i\in J}\Gamma(s+u_i-N_{\min})=\prod_{j\in J}\Gamma(s+u'_j-N'_j).
\end{equation}
By Lemma \ref{L:Lemma}, we have
\[
\{u_i-N_{\min}: i\in J\}=\{u'_j-N'_j: j\in J\}
\]
as multisets. Now suppose $u_k-N_{\min}$ is minimal in the left hand
side, which is the same as supposing 
$u_k$ is minimal in $\{u_i: i\in J\}$. There exists $l\in J$
such that $u_k-N_{\min}=u'_l-N'_l$, which gives
$u_k=u'_l+(N_{\min}-N'_l)$.  Since $N_{\min}-N'_l>0$, this implies
$u_l=u'_l\prec u_k$, which contradicts the
minimality of $u_k$. Hence $N'_{\min}\geq N_{\min}$. By symmetry, we
have $N'_{\min}\leq N_{\min}$, namely $N'_{\min}=N_{\min}$.

Then we can proceed analogously to Step 3 of the complex case. Namely,
first let us reorder the indices so that in addition to
\eqref{E:relation2-R} we have
\begin{equation}\label{E:relation1-R}
0<N_1=\cdots=N_{d}<N_{d+1}\leq\cdots\leq N_q.
\end{equation}
By twisting by $\varphi_{-M,s}$ with $M=N_1+1$, we have
\begin{equation}\label{E:L-equality-R3}
F(s)\prod_{i=1}^d\Gamma(s+u_i-N_1)\prod_{i=d+1}^q\Gamma(s+u_i-N_1-1)
=\prod_{j=1}^q\Gamma(s+u_i-N_1-1+\ep_j),
\end{equation}
where
\[
\ep_j=\begin{cases}1,\quad\text{if $N'_j=N_1=N'_{\min}$}\\
0,\quad\text{if $N'_j>N_1$}.
\end{cases}
\]
By shifting $s\mapsto s+N_1+1$, the equality \eqref{E:L-equality-R3}
becomes
\begin{equation}\label{E:L-equality-R3-1}
F(s+N_1+1)\prod_{i=1}^d\Gamma(s+u_i+1)\cdot\prod_{i=d+1}^q\Gamma(s+u_i)
=\prod_{j=1}^q\Gamma(s+u_i+\ep_j).
\end{equation}
This is exactly the same as \eqref{E:L-equality-C3} of the complex
case. Hence by arguing in the same way, we can show that there exists
a pair of indices $i$ and $j$ such that $u_i=u'_j$ and
$N_i=N'_j$. Hence for such indices, we have $\varphi_{-N_i,
  u_i}=\varphi_{-N'_j, u'_j}$, namely $\varphi$ and $\varphi'$ have a
common constituent. By arguing inductively, we have
$\varphi=\varphi'$. 

The proof is complete.


\section{Low rank cases}


In this section, we will consider low rank cases for $F=\R$. In
particular, we will show that twisting by $\GL(2)$ is sharp in
that one cannot characterize the set $\Irr_n$ only by $\GL(1)$ twists
even for $n=2$. Yet, we will show that for $n=2, 3$, we can
characterize it if we  assume the central characters are equal.
But for $n\geq 4$, even with the central character
assumption, the $\GL(1)$-twist is not enough.

\subsection{Case for $\GL_2(\R)$ and $\GL_3(\R)$}
We will prove the following which characterizes the sets $\Irr_2$ and
$\Irr_3$ only
by $\GL(1)$-twists and the central character assumption.
\begin{Prop}
Let $n=2$ or $3$.
Assume $\pi, \pi'\in\Irr_n$ are such that
\[
L(s, \pi\times\la)=L(s, \pi'\times\la)
\]
for all characters $\la$ on $\R^\times$. Further assume that the
central characters of $\pi$ and $\pi'$ are equal. Then $\pi=\pi'$.
\end{Prop}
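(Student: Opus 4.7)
The plan is to pass to the Galois side via the local Langlands correspondence and write
\[
\varphi = \bigoplus_{i=1}^{p} \la_{\ep_i, t_i} \;\oplus\; \bigoplus_{i=1}^{q} \varphi_{-N_i, u_i}, \qquad N_i > 0,
\]
and similarly for $\varphi'$, with $p + 2q = p' + 2q' = n \in \{2, 3\}$. The central-character hypothesis translates to $\det \varphi = \det \varphi'$ as characters on $\R^\times$, and will be used in place of the $\GL(2)$-twists that are absent from the assumption. The crucial observation is that a $\GL(1)$-twist cannot see the parameter $N$ of a two-dimensional constituent: since $L(\varphi_{-N, u}) = L(\chi_{-N, u})$ for every $N$, the duplication formula (exactly as applied in the paper to $\varphi_{0, t}$) gives
\[
L(\varphi_{-N, u} \otimes \la_{\delta, s}) = L(\la_{0, u} \otimes \la_{\delta, s}) \cdot L(\la_{1, u+1} \otimes \la_{\delta, s})
\]
for every $N > 0$ and $\delta \in \{0, 1\}$. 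Accordingly I attach to $\varphi$ the \emph{forgetful labeled multiset}
\[
\mathcal{M}(\varphi) := \{(\ep_i, t_i) : 1 \le i \le p\} \;\sqcup\; \bigsqcup_{i=1}^{q} \{(0, u_i),\; (1, u_i + 1)\},
\]
which records everything $\GL(1)$-twisted $L$-factors can detect.

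First I would prove $\mathcal{M}(\varphi) = \mathcal{M}(\varphi')$. From the $\la_{0, s}$-twist, Lemma~\ref{L:Lemma} (applied after the substitution $s \mapsto 2s$ to the resulting product of $\Gamma((s + c)/2)$'s) yields equality of the unlabeled multiset $\mathcal{C}_0(\varphi) := \{t_i\}_{i=1}^{p} \sqcup \bigsqcup_{i=1}^{q} \{u_i,\, u_i + 1\}$ with its primed counterpart. From the $\la_{1, s}$-twist, in which each entry with $\ep = 1$ is shifted by $-2$ and each two-dimensional contribution shifts to $\{u_i - 1, u_i\}$, a second multiset $\mathcal{C}_1(\varphi)$ is determined. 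Writing $m_\ep(t)$ for the multiplicity of $(\ep, t)$ in $\mathcal{M}(\varphi)$, the difference of the two multiplicity functions equals $m_1(t) - m_1(t+2)$, and a top-down induction on $t$ (using that the support is finite) recovers all the $m_\ep(t)$, so $\mathcal{M}(\varphi) = \mathcal{M}(\varphi')$ as labeled multisets.

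Next I would exploit the central character. From \eqref{E:central_character} a direct computation gives
\[
\omega_\pi(r) = \sign(r)^{\sum_i \ep_i + \sum_i (N_i + 1)} \cdot |r|^{\sum_i (t_i - \ep_i) + \sum_i (2 u_i - N_i)},
\]
while the quantity $\sum_{(\ep, t) \in \mathcal{M}(\varphi)} (t - \ep) = \sum_i (t_i - \ep_i) + 2 \sum_i u_i$ is determined by $\mathcal{M}(\varphi)$ alone. Comparing the $|r|$-exponents with the analogous identity for $\varphi'$, the equalities $\omega_\pi = \omega_{\pi'}$ and $\mathcal{M}(\varphi) = \mathcal{M}(\varphi')$ force
\[
\sum_{i=1}^{q} N_i = \sum_{i=1}^{q'} N'_i
\]
as integers. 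Since $n \le 3$ gives $q, q' \in \{0, 1\}$ and every $N_i$ is a positive integer, this equality forces $q = q'$ and, in the case $q = q' = 1$, also $N_1 = N'_1$.

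Finally I read off the remaining data. If $q = 0$, the multiset $\mathcal{M}(\varphi)$ is literally the multiset of one-dimensional constituents of $\varphi$, so $\varphi = \varphi'$. If $q = 1$, a pair $\{(0, u), (1, u + 1)\}$ is hidden inside $\mathcal{M}(\varphi)$, and I verify by a brief case check that this pair (and hence $u$) is uniquely determined: since $\mathcal{M}$ has only three entries, a would-be swap $u \ne u'$ would require an identification of the form $(0, u') = (1, u + 1)$, contradicting the $\ep$-labels. The main obstacle I anticipate is the bookkeeping in the degenerate subcases $\mathcal{M} = \{(0, u), (0, u), (1, u + 1)\}$ and $\mathcal{M} = \{(0, u), (1, u + 1), (1, u + 1)\}$, where the ``extra'' one-dimensional constituent coincides with one half of the pair; there the central-character computation above already rules out the cross-type interpretations (it would require $N_1 = 0$), so only the desired matching survives and $\varphi = \varphi'$.
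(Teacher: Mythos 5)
Your proof is correct, but it takes a genuinely different route from the paper's. The paper argues by cases on the shapes of $\varphi$ and $\varphi'$: it first rules out, via a fairly long minimal-element analysis combined with the central-character identity $N'=(\ep_1+\ep_2+\ep_3)-\ep'-1$, the possibility that one parameter is a sum of three characters while the other contains a $\varphi_{-N',u'}$, and then treats the two remaining matched shapes separately. Your argument instead isolates exactly the information that $\GL(1)$-twisted $L$-factors carry: since $L(\varphi_{-N,u}\otimes\la_{\delta,s})=L\bigl((\la_{0,u}\oplus\la_{1,u+1})\otimes\la_{\delta,s}\bigr)$ for every $N>0$ (a correct consequence of $\varphi_{-N,u}\otimes\la_{\delta,s}=\varphi_{-N,u+s-\delta}$ and the duplication formula), everything visible to such twists is the labeled multiset $\mathcal{M}(\varphi)$, which you recover from the $\delta=0$ and $\delta=1$ twists by the telescoping identity $m_1(t)=\sum_{k\geq 0}\bigl(c_0(t+2k)-c_1(t+2k)\bigr)$; the central character then supplies the one missing invariant, $\sum_i N_i$ (both the sign-exponent and $|r|$-exponent comparisons are sound, and the $|r|$-exponent of $\sign(r)^a|r|^b$ does determine $b$). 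For $n\leq 3$ one has $q\leq 1$, so $\sum_i N_i$ pins down $q$ and $N_1$, and the final matching of the three labeled entries is forced: once $q=q'$ is known, the label-$0$ and label-$1$ sub-multisets of $\mathcal{M}$ can be matched separately, so the degenerate subcases you flag are harmless. What your approach buys is uniformity and an explanation of sharpness: your first two steps are valid for all $n$, and the failure for $n\geq 4$ is precisely that $\sum_i N_i$ no longer determines the individual $N_i$ when $q\geq 2$ --- which is exactly the paper's $\GL_4(\R)$ counterexample.
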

\begin{proof}
We will give a proof only for
$n=3$. (The cae for $n=2$ is simpler and left to the reader.) 
Assume $\varphi$ and $\varphi'$ are the local Langlands
parameters for $\pi$ and $\pi'$.

First assume
\begin{align}
\label{E:1+1+1}\varphi&=\la_{\ep_1, t_1}\oplus\la_{\ep_2, t_2}\oplus\la_{\ep_3, t_3}\\
\label{E:2+1}\varphi'&=\varphi_{-N', u'}\oplus\la_{\ep', t'}
\end{align}
with $N'>0$, and we will derive a contradiction. First note that the
assumption on the central characters implies
\begin{equation}\label{E:central}
-(\ep_1+\ep_2+\ep_3)+t_1+t_2+t_3=2u'-N'-\ep'+t',
\end{equation}
where we used \eqref{E:central_character} to obtain the right hand side.
Next note that for each $\la_{\delta, s}$ we have 
\begin{align*}
\varphi\otimes\la_{\delta, s}
&=\la_{\delta_1, t_1+s-\gamma_1}\oplus\la_{\delta_2,
  t_2+s-\gamma_2}\oplus \la_{\delta_3, t_3+s-\gamma_3}\\
\varphi'\otimes\la_{\delta,s}
&=\varphi_{-N',u'+s-\delta}\oplus\la_{\delta', t'+s-\gamma'}
\end{align*}
where $\delta_i=\ep_i+\delta\pmod{2}$ and $\gamma_i=2$ (resp.\ 0) if
$\ep_i=\delta=1$ (resp.\ otherwise), and similarly for $\delta'$ and
$\gamma'$.
Now the equality of the $L$-functions implies
\[
F(s)\Gamma\left(\frac{s+t_1-\gamma_1}{2}\right)\cdot
  \Gamma\left(\frac{s+t_2-\gamma_2}{2}\right)\cdot
\Gamma\left(\frac{s+t_3-\gamma_3}{2}\right)
=\Gamma(s+u'-\delta)\cdot \Gamma\left(\frac{s+t'-\gamma'}{2}\right)
\]
for some $F(s)$ without a zero or pole, which is, by the duplication formula, written as
\begin{align*}
F(s)&\Gamma\left(\frac{s+t_1-\gamma_1}{2}\right)\cdot
  \Gamma\left(\frac{s+t_2-\gamma_2}{2}\right)\cdot
\Gamma\left(\frac{s+t_3-\gamma_3}{2}\right)\\
=&\Gamma\left(\frac{s+u'-\delta}{2}\right)\cdot
\Gamma\left(\frac{s+u'-\delta+1}{2}\right)\cdot
\Gamma\left(\frac{s+t'-\gamma'}{2}\right)
\end{align*}
for some other $F(s)$. By Lemma \ref{L:Lemma}, we have
\begin{equation}\label{E:multisets_GL(3)}
\{t_1-\gamma_1, t_2-\gamma_2, t_3-\gamma_3\}
=\{u'-\delta, u'-\delta+1, t'-\gamma'\}
\end{equation}
as multisets. First choose $\delta=0$, which forces
$\gamma_1=\gamma_2=\gamma_3=\gamma'=0$, and hence we have
\[
\{t_1, t_2, t_3\}=\{u', u'+1, t'\}
\]
as multisets. Up to reordering of the indices, we may assume 
\[
t_1=u', t_2=u'+1, t_3=t',
\]
which makes \eqref{E:multisets_GL(3)} into
\begin{equation}\label{E:multisets_GL(3)-1}
\{t_1-\gamma_1, t_1+1-\gamma_2, t_3-\gamma_3\}
=\{t_1-\delta, t_1+1-\delta, t_3-\gamma'\}.
\end{equation}
Also note that the central character assumption \eqref{E:central} implies
\[
-(\ep_1+\ep_2+\ep_3)+u'+u'+1+t'=2u'-N'-\ep'+t',
\]
which gives
\[
N'=(\ep_1+\ep_2+\ep_3)-\ep'-1.
\]
Since $N'>0$, we must always have
\begin{equation}\label{E:epsilon}
\ep_1+\ep_2+\ep_3-\ep'>1.
\end{equation}

By letting $\delta=1$ in \eqref{E:multisets_GL(3)-1}, we have
\begin{equation}\label{E:multisets_GL(3)-2}
\{t_1-\gamma_1, t_1+1-\gamma_2, t_3-\gamma_3\}
=\{t_1-1, t_1, t_3-\gamma'\}.
\end{equation}
Assume $t_1-\gamma_1$ is a minimal element in the left hand
side . Then we must have $\gamma_1=2$ (so $\ep_1=1$), because $t_1-1$
is in the left hand side and so we must have
$t_1-\gamma_1\preceq t_1-1$, and hence we have
\[
\{t_1-2, t_1+1-\gamma_2, t_3-\gamma_3\}
=\{t_1-1, t_1, t_3-\gamma'\}.
\]
Then we must have $t_1-2=t_3-\gamma'$, which implies
\[
\{t_1+1-\gamma_2, t_1-2-\gamma_3+\gamma'\}
=\{t_1-1, t_1\}.
\]
Considering $1-\gamma_2$ is always odd, we conclude
$t_1+1-\gamma_2=t_1-1$, which implies $\gamma_2=2$ (so $\ep_2=1$), and
$t_1-2-\gamma_3+\gamma'=t_1$, which implies $\gamma_3=0$ (so
$\ep_3=0$) and $\gamma'=2$ (so $\ep'=1$). But this contradicts the
condition \eqref{E:epsilon} coming from the central character
assumption. Hence
$t_1-\gamma_1$ cannot be minimal in the left hand side of
\eqref{E:multisets_GL(3)-2}. 

Suppose now that $t_1+1-\gamma_2$ is minimal.  Then this implies that $\gamma_1=0$
and $\gamma_2=2$, in which case $\ep_1=0$ and
$\ep_2=1$. Then \eqref{E:multisets_GL(3)-2} 
implies $t_3-\gamma_3=t_3-\gamma'$, which implies $\gamma_3=\gamma'$, which in turn implies 
$\ep_3=\ep'$. Then the left hand side of \eqref{E:epsilon}
will be $1$, which is a contradiction. Hence neither $t_1-\gamma_1$
nor $t_2+1-\gamma_2$ can be minimal.

So $t_3-\gamma_3$ can be the only minimal
element, which means
\[
t_3-\gamma_3\prec t_1-1\prec t_1
\] or
\[
t_3-\gamma_3\prec t_1-2\prec t_1+1-\gamma_2.
\]
The first case comes from the situation where $\gamma_1 = 0,
\gamma_2 = 2$, which we argued earlier cannot happen.  We consider the
second case. Notice that all the elements are comparable and the inequalities
are all strict. Hence the same has to happen in the right hand
side. But one can easily see that this is impossible because the right
hand side already contains $t_1-1$ and $t_1$ while the second largest
element in the left hand side is $t_1-2$. 

Hence we conclude that we cannot have both \eqref{E:1+1+1} and
\eqref{E:2+1} at the same time; namely, if $\varphi$ is a sum of three irreducible
representations, so is $\varphi'$, and if $\varphi$ is a sum of two
irreducible representations, so is $\varphi'$. 

First, assume
\begin{align*}
\varphi&=\la_{\ep_1, t_1}\oplus\la_{\ep_2, t_2}\oplus\la_{\ep_3, t_3}\\
\varphi'&=\la_{\ep'_1, t'_1}\oplus\la_{\ep'_2, t'_2}\oplus\la_{\ep'_3, t'_3}.
\end{align*}
Then the argument is similar to Step 2 of the real case. Namely by
twisting by $\la_{0, s}$ and equating the $L$-factors, one can get
\[
\{t_1,t_2,t_3\}=\{t'_1,t'_2,t'_3\}
\]
as multisets. Also by twisting by $\la_{1,s}$ and equating the $L$-factors, one can get
\[
\{t_1-\gamma_1,t_2-\gamma_2,t_3-\gamma_3\}=
\{t'_1-\gamma'_1,t'_2-\gamma'_2,t'_3-\gamma'_3\}
\]
as multisets, where $\gamma_i=2$ if $\ep_i=1$, and $\gamma_i=0$ if
$\ep_i=0$, and similarly for $\gamma'_i$. Then the argument of Step 2 of the real case goes
through with the $\ep$'s replaced by $\gamma$'s. Hence we get $\varphi=\varphi'$.

Next, assume
\begin{align*}
\varphi&=\varphi_{-N, t}\oplus\la_{\ep, u}\\
\varphi'&=\varphi_{-N', t'}\oplus\la_{\ep', u'}.
\end{align*}
By twisting $\la_{\delta, s}$, equating the $L$-factors and using the
duplication formula, one obtains
\[
\{t-\delta, t+1-\delta, u-\gamma\}=
\{t'-\delta, t'+1-\delta, u'-\gamma'\}
\]
where $\gamma=2$ if $\ep=\delta=1$ and 0 otherwise, and similarly for
$\gamma'$. By examining minimal elements, one can see that this implies
$t=t'$, $u=u'$ and $\gamma=\gamma'$ (so $\ep=\ep'$). (The detail is left to the reader.) Finally the
central character assumption implies $N=N'$. The proposition follows.
\end{proof}

Let us remark that even when $n=2$, the assumption on the central
character is necessary. Consider the following example.
\begin{Ex}
Let 
\begin{align*}
\varphi&=\varphi_{-N, t}\\
\varphi'&=\varphi_{-N', t}
\end{align*}
with $N>0, N'>0$ but $N\neq N'$. Then for any
$\la_{\delta, s}$, we have
\[
L(\varphi\otimes\la_{\delta, s})=L(\varphi'\otimes\la_{\delta,s})
=2(2\pi)^{-(t+s-\delta)}\Gamma(t+s-\delta).
\]
Of course, there are infinitely many such choices for $N$ and
$N'$. If we further assume $N=N'\pmod{4}$, not only the $L$-factors
but the $\epsilon$-factors coincide. Indeed, we can compute
\[
\epsilon(\varphi\otimes\la_{\delta, s},
  \psi_\R)=\epsilon(\varphi'\otimes\la_{\delta, s}, \psi_\R)
=-i^{N+1}=-i^{N'+1},
\]
and again there are infinitely many such choices.
\end{Ex}

\subsection{Case for $\GL_4(\R)$}
Finally, the following example for $\GL_4(\R)$ shows that,
even with the central character assumption and the equality of the
$\epsilon$-factors, one will still need $\GL(2)$-twists.
\begin{Ex}
Let
\begin{align*}
\varphi&=\varphi_{-N_1, t_1}\oplus\varphi_{-N_2, t_2}\\
\varphi'&=\varphi_{-N'_1, t'_1}\oplus\varphi_{-N'_2, t'_2}.
\end{align*}
Let $\omega$ and $\omega'$ be the central characters of the
representations corresponding to $\varphi$ and $\varphi'$,
respectively. Then by \eqref{E:central_character} we know
\begin{align*}
\omega(r)&=r^{-(N_1+N_2)+2(t_1+t_2)}\\
\omega'(r)&=r^{-(N'_1+N'_2)+2(t'_1+t'_2)}
\end{align*}
for $r\in\R^\times$. For each $\la_{\delta, s}$, we have
\begin{align*}
L(\varphi\otimes\la_{\delta,s})&=4(2\pi)^{-(2s+t_1+t_2-2\delta)}
\Gamma(s+t_1-\delta)\Gamma(s+t_2-\delta)\\
L(\varphi'\otimes\la_{\delta,s})&=4(2\pi)^{-(2s+t'_1+t'_2-2\delta)}
\Gamma(s+t'_1-\delta)\Gamma(s+t'_2-\delta)
\end{align*}
and
\begin{align*}
\epsilon(\varphi\otimes\la_{\delta,s}, \psi_\R)&=i^{N_1+N_2+2}\\
\epsilon(\varphi'\otimes\la_{\delta,s}, \psi_\R)&=i^{N'_1+N'_2+2}.
\end{align*}
Hence if we have
\[
N_1+N_2=N'_1+N'_2\quad\text{and}\quad \{t_1, t_2\}=\{t'_1, t'_2\}, 
\]
then the $L$-factors, $\epsilon$-factors and central characters will
all coincide. Of course there are infinitely many such choices.
\end{Ex}

\end{document}